\author{Shai Sarussi}
\title{Totally ordered sets and the prime spectra of rings}
\date{}
\begin{document}

\newtheorem{thm}{Theorem}[section]
\newtheorem{cor}[thm]{Corollary}
\newtheorem{lem}[thm]{Lemma}
\newtheorem{prop}[thm]{Proposition}
\newtheorem{ax}{Axiom}

\theoremstyle{definition}
\newtheorem{defn}[thm]{Definition}

\theoremstyle{remark}
\newtheorem{rem}[thm]{Remark}
\newtheorem{ex}[thm]{Example}
\newtheorem*{notation}{Notation}

\newcommand{\qv}{{quasi-valuation\ }}


\maketitle

\begin{abstract} {Let $T$ be a totally ordered set
and let $D(T)$ denote the set of all cuts of $T$.
We prove the existence of a discrete valuation domain $O_{v}$ such that $T$ is order isomorphic to two special subsets of Spec$(O_{v})$.
We prove that if $A$ is a ring (not necessarily commutative) whose prime spectrum is totally ordered
and satisfies (K2), 
then there exists a totally ordered set
$U \subseteq \text{Spec}(A)$ such that the prime spectrum of $A$ is order isomorphic to $D(U)$.
We also present equivalent conditions for a totally ordered set to be a Dedekind totally ordered set.
At the end, we present an algebraic geometry point of view.



}\end{abstract} 
\section{Introduction and some basic terminology}

In his book Commutative Rings, Kaplansky presented two basic properties regarding the prime spectrum, Spec$(R)$,
of a commutative ring $R$. Explicitly, the first property (K1) is that every nonempty totally ordered subset of Spec$(R)$ has
an infimum and a supremum (cf. [Ka, Theorem 9]). The second property (K2) is that given two prime ideals $P_1 \subset P_2$, there exist prime ideals
$P_1 \subseteq P_3 \subset P_4 \subseteq P_2$ such that there is no prime ideal between $P_3$ and $P_4$ (cf. [Ka, Theorem 11]).
Kaplansky conjectured that these two properties characterize the prime spectrum of a commutative ring.
However, in 1969 it turned out that the conjecture was false, due to Hochster. 
In his work, Hochster characterized topological spaces appearing as the
prime spectra of commutative rings (cf. [Ho, Theorem 6 and Proposition 10]).
Speed (cf. [Sp, Corollary 1]) pointed out that Hochster's result gives the following characterization of partially ordered sets
appearing as the prime spectra of commutative rings: a partially ordered set $S$ is isomorphic to the prime spectrum of some commutative ring
if and only if $S$ is an inverse limit of finite partially ordered sets
in the category of partially ordered sets.
In 1973, Lewis (cf. [Le, Theorem 10]) proved that any finite partially ordered set is order isomorphic to
the prime spectrum of some commutative ring; Lewis' proof provides a way of constructing a ring with a desired
spectrum. Lewis also gave an explicit example showing that the properties (K1) and (K2) are not sufficient.
In 1994, Facchini (cf. [Fa, Theorem 5.3]) proved 
that a partially ordered set is order isomorphic to the prime spectrum of a generalized
Dedekind domain if and only if it is a Noetherian
tree with a smallest element.


In this paper we study some connections between totally ordered sets, Dedekind totally ordered sets,
and the prime spectra of rings having totally ordered prime spectra; in particular, valuation domains.
We prove that if $A$ is a ring such that Spec$(A)$ satisfies (K2)
(between any two comparable elements of Spec$(A)$
there exist immediate neighbors), and is totally ordered, then
Spec$(A)$ is order isomorphic to the set of all cuts of
prime ideals of $A$ which are immediate predecessors (and to the set of all cuts of
prime ideals of $A$ which are immediate successors).
We also prove that
any totally ordered set $T$
is order isomorphic to the set of all immediate predecessors (and to the set of all immediate successors) of Spec$(O_{v})$, for some discrete valuation domain $O_{v}$.
These results enable us to present several equivalent conditions for a totally ordered set
to be order isomorphic to the prime spectrum of some discrete valuation domain.


Here is a brief overview of this paper. In section 2 we study more closely the property presented in [Ka, Theorem 11].
More precisely, let $B$ be a set of sets partially
ordered by inclusion and let $X_1 \subset X_2$ be two elements of $B$. We give a necessary and sufficient condition
for the existence of immediate neighbors in $B$ between $X_1$ and $X_2$.
We conclude that if $B$ is
closed under unions and intersections of nonempty chains, then $B$ satisfies the property (K2).
We prove in Theorem
\ref{union of initial subset of IS gives every nonminimal element} that if $B$ satisfies (K2) and is closed
under unions of nonempty chains, then every non-minimal element $X$ of $B$
is the union of any maximal chain of immediate successors contained in $X$. 
In Theorem \ref{when B is a chain} we prove that if such $B$ is a chain and has a smallest element, then $B$ is isomorphic to the set of all cuts
of its immediate successors (and to the set of all cuts of
its immediate predecessors).


In section 3 we apply Theorem \ref{when B is a chain} to ring theory.
We then consider a totally ordered set $T$ and 
construct a totally ordered group $\Gamma$.
We study the ``$T$-isolated subgroups" of $\Gamma$ and prove that these are precisely the isolated
subgroups having an immediate predecessor in $\Gamma$. This enables us to prove one of the main results of this paper:
every totally ordered set is order isomorphic to the set of all prime ideals of $O_{v}$ having an immediate successor,
for some discrete valuation domain $O_{v}$. We present in Corollary \ref{equivalent conditions}
equivalent conditions for a totally ordered set to be a Dedekind totally ordered set.
Finally, we consider a ring $A$ such that Spec$(A)$ satisfies (K2) and is totally ordered,
and discuss the set of immediate predecessors and
the set of immediate successors of Spec$(A)$,
from an algebraic geometry point of view.



In this paper the symbol $\subset$ means proper inclusion,
and the symbol $\subseteq$ means inclusion or equality.

Recall that a valuation on a field $F$ is a function $v : F
\rightarrow \Gamma \cup \{  \infty \}$, where $\Gamma$ is a
totally ordered abelian group and where $v$ satisfies the following
conditions:


 (A1) $v(x) \neq \infty$ iff $  x \neq 0$, for all $x\in F$;

(A2) $v(xy) = v(x)+v(y)$ for all $x,y \in F$;

(A3) $v(x+y) \geq \min \{ v(x),v(y) \}$ for all $x,y \in F$.

\begin{defn} Let $(S, \leq)$ be a poset. A subset $\mathcal I$ of $S$ is
called {\it initial} (resp. {\it final}) if for every $\beta \in \mathcal I$ and $\alpha \in S$, if
$\alpha \leq \beta$ (resp. $  \alpha \geq \beta $) then $\alpha \in \mathcal I$.  \end{defn}

We review now some of the basic
notions of cuts of totally ordered sets. For further information
on cuts see, for example, [FKK] or [Weh].

\begin{defn} Let $T$ be a totally ordered set. 
A cut $\mathcal
A=(\mathcal A^{L}, \mathcal A^{R})$ of $T$ is a partition of $T$
into two subsets $\mathcal A^{L}$ and $\mathcal A^{R}$, such that,
for every $\alpha \in \mathcal A^{L}$ and $\beta \in \mathcal
A^{R}$, $\alpha<\beta$. We denote the set of all cuts of $T$ by $D(T)$ (``$D$" for Dedekind, who was the first
to consider a notion of cut). \end{defn}

Note that, viewing $T$ as a lattice, one can alternatively consider the ideal lattice of $T$, which is the set of all
ideals of $T$ (i.e., inital subsets of $T$). However, we prefer the notion of cuts, since we occasionally use both initial and final
subsets.

Let $T$ be a totally ordered set. The set of all cuts $\mathcal A=(\mathcal A^{L}, \mathcal A^{R})$
 of $T$ contains the two cuts $(\emptyset,T)$ and
 $(T,\emptyset)$; these are commonly denoted by $-\infty$ and
 $\infty$, respectively. Given $\alpha \in T$, we denote
$$(-\infty,\alpha]=\{\gamma \in T \mid \gamma \leq \alpha \}$$ and
$$(\alpha,\infty)=\{\gamma \in T \mid \gamma > \alpha \}.$$
One defines similarly the sets $(-\infty,\alpha)$ and
$[\alpha,\infty)$.

To define a cut one often writes $\mathcal A^{L}= \mathcal I$, meaning that
$\mathcal A$ is defined as $(\mathcal I, T \setminus \mathcal I)$ when $\mathcal I$ is an
initial subset of $T$. We define a (left) ordering on $D(T)$
by $\mathcal A \leq \mathcal B$ iff $\mathcal A^{L}
\subseteq \mathcal B^{L}$ (or equivalently $\mathcal A^{R}
\supseteq \mathcal B^{R}$). Given $S  \subseteq T$, $S^{+}$ is the smallest cut $\mathcal A$ such
that $S  \subseteq \mathcal A^{L}$. In particular, for $\alpha \in T$ we have
$\{\alpha\}^{+}=((-\infty,\alpha],(\alpha,\infty))$.

\begin{defn} A totally ordered set $T'$ is called a {\it Dedekind totally ordered set}
if there exists a totally ordered set $T$ such that $D(T) $ is isomorphic to $ T'$, as totally ordered sets. \end{defn}

\begin {rem} \label{def of phi} Note that there are two natural order preserving injections of totally ordered sets $\varphi_1,  \varphi_2:
T \rightarrow  D(T)$ defined in the following
way: for every $\alpha \in T$, $$\varphi_1 (\alpha) =
((-\infty,\alpha],(\alpha,\infty)) \text{ and } \varphi_2 (\alpha) =((-\infty,\alpha),[\alpha,\infty)). $$

\end{rem}



\begin{defn} Let $(S, \leq)$ be a poset and let $a,b \in S$. We write $a<b$ if $a \leq b$ and $a \neq b$; in this case, we say that $a$ is a {\it predecessor} of $b$.
We say that $a$ and $b$ are {\it immediate neighbors} in $S$ if
$a < b$ and there is no $c \in S$ such that $a < c < b$.
We also say that $a$ is an {\it immediate predecessor} of $b$ in $S$ or that $b$ is an {\it immediate successor} of $a$, if $a$ and $b$ are immediate neighbors in $S$.

\end{defn}

\begin{defn} Let $(S, \leq)$ be a poset. We say that $S$ satisfies (K1) if every nonempty totally ordered subset of $S$
has an infimum and a supremum. We say that $S$ satisfies (K2) if for all $a<b$ in $S$ there exist $a \leq c<d \leq b$ in $S$
such that $c$ and $d$ are immediate neighbors.

\end{defn}


\begin{defn} Let $(S, \leq)$ be a poset, let $ C \subseteq  S $ be a chain and let $a,b \in S$.
We say that $ C$ is a {\it maximal chain between $a$ and $b$} if $a$ is the smallest member of $C$,
$b$ is the greatest member of $C$, and
for any $x \in S \setminus C$ such that $a < x < b$,
one has $C \cup \{ x\}$ is not a chain (i.e., one cannot ``insert" an element of $S$ between the elements of $C$).
We say that $ C$ is a {\it maximal chain in $S$} if
for any $x \in S \setminus C$, one has $C \cup \{ x\}$ is not a chain.
\end{defn}

Let $(S, \leq)$ be a poset.
We define now two special subsets of $S$. Let
$$\text{IS}( S)=\{ a \in S \mid a \text{ has an immediate predecessor in }  S \};$$
and let $$\text{IP}( S)=\{ a \in S \mid a \text{ has an immediate successor in }  S \}.$$

\begin{rem} \label{phi1T=IP and phi2T=IP} Let $T$ be a totally ordered set and let $\varphi_1,  \varphi_2:
T \rightarrow  D(T)$ be as in Remark \ref{def of phi}. Then $\varphi_1(T)=\text{IS}(D(T))$ and $\varphi_2(T)=\text{IP}(D(T))$.

\end{rem}

Clearly, the class of all nonempty totally ordered sets strictly contains the class of all
Dedekind totally ordered sets; since, for example, a Dedekind totally ordered set satisfies (K1) and (K2).

\section{Immediate predecessors and immediate successors -- some general results}

In this section, we let $B$ be a nonempty set of sets. 
We equip $ B$ with the partial order of containment. 
We prove that if $B$ satisfies (K2), and is closed
under unions of nonempty chains (resp. intersections of nonempty chains),
then $\text{IS}(B)$ (resp. $\text{IP}(B)$) generate $B \setminus\{ \text{minimal elements of } B \}$ (resp.
$B \setminus\{ \text{maximal elements of } B \}$), in a sense to be clarified soon.
We also obtain a useful connection when $B$ is a chain.


The following lemma is a generalization of [Ka, Theorem 11]; the idea of the proof is quite similar.
We prove it here for the reader's convenience.

\begin{lem} \label{genKap} 
Let $X_1 \subset X_2 \in  B $. Then there exist
immediate neighbors in $ B $ between $X_1 $ and $X_2$ (i.e., there exist
$Y_1, Y_2 \in  B $ such that $Y_1 $ and $Y_2$ are immediate neighbors in $ B $
and $X_1 \subseteq Y_1 \subset Y_2 \subseteq X_2$) iff there exist $y \in X_2 \setminus X_1$ and a maximal chain
$ C \subseteq  B $ between $X_{1}$ and $X_{2}$, 
such that $\cup_{X \in C, y \notin X} X \in B$ and $\cap_{X \in C, y \in X} X \in B$.



\end{lem}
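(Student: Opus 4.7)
The plan is to prove both implications; the key observation in each direction is that an element $y \in X_2 \setminus X_1$ partitions any chain $C$ between $X_1$ and $X_2$ into the sets containing $y$ and those not containing $y$, and the candidate immediate neighbors arise as the union of the second part and the intersection of the first.

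For the $(\Leftarrow)$ direction, assume $y$ and $C$ are as in the statement, and set $U := \bigcup_{X \in C,\, y \notin X} X$ and $V := \bigcap_{X \in C,\, y \in X} X$, both in $B$ by hypothesis. Since $y \notin X_1$ and $X_1 \in C$ we get $X_1 \subseteq U$, and dually $V \subseteq X_2$. Comparability in the chain forces any $X \in C$ with $y \notin X$ and any $X' \in C$ with $y \in X'$ to satisfy $X \subset X'$, whence $U \subseteq V$; the strict inclusion $U \subset V$ follows from $y \in V \setminus U$. The main step is to rule out any $W \in B$ with $U \subset W \subset V$. First observe $W \notin C$: if $y \in W$ then $V \subseteq W$, contradicting $W \subset V$, and if $y \notin W$ then $W \subseteq U$, contradicting $U \subset W$. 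Next, for any $X \in C$, either $y \in X$ (so $V \subseteq X$, giving $W \subset X$) or $y \notin X$ (so $X \subseteq U$, giving $X \subset W$); hence $W$ is comparable with every element of $C$. Thus $C \cup \{W\}$ is a chain strictly larger than $C$, contradicting the maximality of $C$.

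For the $(\Rightarrow)$ direction, suppose immediate neighbors $Y_1, Y_2 \in B$ satisfy $X_1 \subseteq Y_1 \subset Y_2 \subseteq X_2$. Pick $y \in Y_2 \setminus Y_1$; then $y \in X_2 \setminus X_1$. Apply Zorn's lemma to the collection of chains in $B$ between $X_1$ and $X_2$ containing $\{X_1, Y_1, Y_2, X_2\}$, producing a maximal chain $C$ with the required endpoints that includes $Y_1$ and $Y_2$. To verify $\bigcup_{X \in C,\, y \notin X} X = Y_1$, note that any $X \in C$ with $y \notin X$ is comparable with $Y_2$ (where $y \in Y_2$), forcing $X \subset Y_2$; then comparability with $Y_1$ combined with the immediate-neighbor property of $(Y_1, Y_2)$ yields $X \subseteq Y_1$. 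Since $Y_1$ itself appears in the union, equality holds. The dual argument gives $\bigcap_{X \in C,\, y \in X} X = Y_2$. Both $Y_1$ and $Y_2$ lie in $B$, completing this direction.

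The main obstacle is the comparability check in the $(\Leftarrow)$ direction: the critical observation is that $U$ and $V$ are defined precisely so that splitting on the membership of $y$ in an arbitrary $X \in C$ (rather than on $y \in W$) forces $W$ to be comparable with every element of $C$. The Zorn's lemma invocation in the other direction is standard, and the remaining verifications are routine bookkeeping with chain comparability and the defining property of $(Y_1, Y_2)$ being immediate neighbors.
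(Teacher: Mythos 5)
Your proof is correct and follows essentially the same route as the paper: the $(\Leftarrow)$ direction is the Kaplansky-style argument (which the paper merely asserts, and you spell out correctly), and the $(\Rightarrow)$ direction uses Zorn's Lemma to build a maximal chain through $Y_1$ and $Y_2$, differing from the paper only in that you take one maximal chain containing $\{X_1,Y_1,Y_2,X_2\}$ rather than gluing a maximal chain between $X_1$ and $Y_1$ to one between $Y_2$ and $X_2$. This is a cosmetic difference; both yield the same identification of the union with $Y_1$ and the intersection with $Y_2$.
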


\begin{proof} $(\Leftarrow)$ Since $ C $ is a maximal chain
 between $X_{1}$ and $X_{2}$, $\cup_{X \in C, y \notin X} X $ and $\cap_{X \in C, y \in X} X $ are
immediate neighbors in $ B $ between $X_1 $ and $X_2$.
$(\Rightarrow)$ Let $Y_1 \subset Y_2$ be immediate neighbors in $ B $ between $X_1 $ and $X_2$.
Using Zorn's Lemma, let $C_{1} \subseteq B$ (resp. $C_{2} \subseteq B$) be a maximal chain between $X_{1}$ and $Y_{1}$
(resp. between $Y_{2}$ and $X_{2}$). Then, $y \in Y_{2} \setminus Y_{1}$ and $C=C_{1} \cup C_{2}$ are the required elements.

\end{proof}


By Lemma 2.1 we have,

\begin{lem} \label{kap1 implies kap2} If for every nonempty chain $ C \subseteq  B $, $\cup_{X \in  C} X \in  B$
and $\cap_{X \in  C} X \in  B$, then $B$ satisfies (K2).

\end{lem}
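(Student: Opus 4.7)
The plan is to reduce directly to Lemma \ref{genKap}, using the closure hypotheses to supply the unions and intersections required by that criterion. Given any $X_1 \subset X_2$ in $B$, I would first fix any witness $y \in X_2 \setminus X_1$; such a $y$ exists because the inclusion is proper.

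Next, I would invoke Zorn's Lemma on the collection of chains in $B$ between $X_1$ and $X_2$ (ordered by containment of chains), to extract a maximal such chain $C$. Then I would partition $C$ into the subchain $C^{-} = \{X \in C \mid y \notin X\}$ and the subchain $C^{+} = \{X \in C \mid y \in X\}$. These are again chains in $B$, and both are nonempty: $X_1 \in C^{-}$ because $y \notin X_1$, and $X_2 \in C^{+}$ because $y \in X_2$.

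At this point the closure hypothesis kicks in: since $C^{-}$ is a nonempty chain in $B$, $\bigcup_{X \in C^{-}} X \in B$, and since $C^{+}$ is a nonempty chain in $B$, $\bigcap_{X \in C^{+}} X \in B$. These are exactly the sets $\bigcup_{X \in C,\, y \notin X} X$ and $\bigcap_{X \in C,\, y \in X} X$ appearing in the hypothesis of Lemma \ref{genKap}. Applying that lemma with our $y$ and $C$ yields immediate neighbors in $B$ between $X_1$ and $X_2$, which is exactly property (K2).

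I do not expect any real obstacle; the only points to handle carefully are the nonemptiness of $C^{-}$ and $C^{+}$ (so that the hypothesis of closure under nonempty-chain unions and intersections genuinely applies) and the routine Zorn argument producing the maximal chain. Everything else is immediate from Lemma \ref{genKap}.
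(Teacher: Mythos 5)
Your argument is correct and is exactly the intended one: the paper derives this lemma directly from Lemma \ref{genKap} ("By Lemma 2.1 we have"), and your proof simply spells out the details — choose $y \in X_2 \setminus X_1$, extract a maximal chain between $X_1$ and $X_2$ via Zorn, and note the closure hypotheses supply the required union and intersection. Nothing further is needed.
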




The following theorem is of utmost importance to our study.

\begin{thm} \label{union of initial subset of IS gives every nonminimal element}
Assume that $B$ satisfies (K2) and for every nonempty chain $ C \subseteq  \text{IS}(B) $, $\cup_{Z \in  C} Z \in  B$.
Let $X $ be an element of $B$
which is not minimal.
Then there exists a unique maximal nonempty initial subset $\mathcal I$ of $\text{IS}( B)$ (maximal with respect to containment) such that
$X=\cup_{Y \in \mathcal I}Y$. 
Furthermore, $X=\cup_{Y \in  E}Y$ for every maximal chain $E \subseteq  \mathcal I$.
\end{thm}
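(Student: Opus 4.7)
The plan is to construct $\mathcal{I}$ explicitly as the collection of all elements of $\text{IS}(B)$ contained in $X$, and then verify it has all the required properties. Concretely, I set
$$\mathcal{I}=\{Y\in\text{IS}(B) \mid Y\subseteq X\}.$$
This is immediately seen to be an initial subset of $\text{IS}(B)$: if $Y\in\mathcal{I}$ and $Y'\in\text{IS}(B)$ with $Y'\subseteq Y$, then $Y'\subseteq X$, so $Y'\in\mathcal{I}$. To see that $\mathcal{I}$ is nonempty I use that $X$ is non-minimal: pick some $X_0\in B$ with $X_0\subsetneq X$; then by (K2) there exist immediate neighbors $Y_1\subset Y_2$ in $B$ with $X_0\subseteq Y_1\subset Y_2\subseteq X$. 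Since $Y_2$ has the immediate predecessor $Y_1$ in $B$, we get $Y_2\in\text{IS}(B)$ and $Y_2\subseteq X$, so $Y_2\in\mathcal{I}$.

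Next I would prove the ``furthermore'' clause, from which $X=\cup_{Y\in\mathcal{I}}Y$ will follow. Let $E\subseteq\mathcal{I}$ be a maximal chain, which exists by Zorn's Lemma applied to the chains of $\mathcal{I}$ ordered by inclusion (unions of chains of chains are chains, and $\mathcal{I}$ is nonempty). Since $E$ is a nonempty chain in $\text{IS}(B)$, the chain-closure hypothesis gives $Z:=\cup_{Y\in E}Y\in B$, and clearly $Z\subseteq X$. I claim $Z=X$. If not, applying (K2) to $Z\subsetneq X$ produces immediate neighbors $Y_1\subset Y_2$ in $B$ with $Z\subseteq Y_1\subset Y_2\subseteq X$; then $Y_2\in\text{IS}(B)$ and $Y_2\subseteq X$, so $Y_2\in\mathcal{I}$, while $Y_2\supsetneq Z\supseteq Y$ for every $Y\in E$. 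Thus $E\cup\{Y_2\}$ is a chain in $\mathcal{I}$ strictly containing $E$, contradicting maximality of $E$. Hence $Z=X$, which proves the ``furthermore'' clause; combined with the trivial inclusion $\cup_{Y\in\mathcal{I}}Y\subseteq X$, it also gives $X=\cup_{Y\in\mathcal{I}}Y$.

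For uniqueness and maximality, observe that if $\mathcal{J}$ is any initial subset of $\text{IS}(B)$ satisfying $X=\cup_{Y\in\mathcal{J}}Y$, then every $Y\in\mathcal{J}$ is an element of $\text{IS}(B)$ contained in $X$, hence belongs to $\mathcal{I}$. So $\mathcal{J}\subseteq\mathcal{I}$, which shows that $\mathcal{I}$ is the unique largest (in particular, the unique maximal) such initial subset. The pivot of the whole argument is the contradiction in the second paragraph, where (K2) and the chain-closure hypothesis must be combined; the only delicate point is to check that the $Y_2$ produced by (K2) is strictly larger than every element of $E$ (not merely comparable to them), which is forced by $Y_2\supsetneq Z\supseteq Y$ for all $Y\in E$. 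Apart from this, the argument is routine bookkeeping.
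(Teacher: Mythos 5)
Your proposal is correct and follows essentially the same route as the paper: the same choice $\mathcal I=\{Y\in\text{IS}(B)\mid Y\subseteq X\}$, nonemptiness via (K2) applied to a proper predecessor of $X$, the contradiction with maximality of a chain $E$ when $\cup_{Y\in E}Y\subsetneq X$, and the same containment argument for uniqueness. Your extra checks (existence of maximal chains via Zorn, and that the new element strictly contains every member of $E$) are just details the paper leaves implicit.
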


\begin{proof} Let $\mathcal I=\{ Y \in \text{IS}( B) \mid Y \subseteq X\}$.
Since $X$ is not minimal in $B$, there exists $Z_0 \in B$ such that  $Z_0 \subset X$.
Thus, since $B$ satisfies (K2), 
$\mathcal I \neq \emptyset$. Clearly, $\mathcal I$ is an initial subset of $\text{IS}( B)$.
Now, let $E $ be any maximal chain contained in $ \mathcal I$.
By the assumption on $ B$, $\cup_{Y \in E}Y \in  B$.
Assume to the contrary that
$\cup_{Y \in  E}Y \subset X$. Then, since $B$ satisfies (K2), 
there exists $\cup_{Y \in  E}Y  \subset Y_{1} \in \mathcal I$.
Hence, $E \cup \{ Y_{1}\} \subseteq \mathcal I$ is a chain strictly containing $E$, a contradiction.
Thus, $\cup_{Y \in  \mathcal I}Y = \cup_{Y \in  E}Y = X$.
Finally, it is clear that any subset $J$ of $\text{IS}( B)$ such that $X=\cup_{Y \in J}Y$
satisfies $J \subseteq \mathcal I$.


\end{proof}

It is not difficult to see that the assumptions in Theorem \ref{union of initial subset of IS gives every nonminimal element}
are sufficient but not necessary; 
however, if we omit any one of the assumptions then the theorem is not valid anymore. 
We note that the dual of Theorem \ref{union of initial subset of IS gives every nonminimal element} is also valid.
We state it here without a proof.

\begin{thm} \label{intersection of final subset of IP gives every nonmaximal element}
Assume that $B$ satisfies (K2) and for every nonempty chain $ C \subseteq  \text{IP}(B) $, $\cap_{Z \in  C} Z \in  B$. Let $X $ be an element of $B$
which is not maximal.
Then there exists a unique maximal nonempty final subset $\mathcal J$ of $\text{IP}( B)$ such that 
$X=\cap_{Y \in \mathcal J}Y$. 
Furthermore, $X=\cap_{Y \in  E}Y$ for every maximal chain $E \subseteq  \mathcal J$.
\end{thm}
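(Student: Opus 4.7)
The plan is to dualize the proof of Theorem \ref{union of initial subset of IS gives every nonminimal element} step by step, swapping unions for intersections, initial for final, and $\text{IS}$ for $\text{IP}$; the hypotheses have been symmetrized precisely so that this goes through.

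First I would define $\mathcal J = \{ Y \in \text{IP}(B) \mid X \subseteq Y \}$, which is manifestly a final subset of $\text{IP}(B)$. To show $\mathcal J$ is nonempty, I use that $X$ is not maximal in $B$: there exists $Z_0 \in B$ with $X \subset Z_0$, and property (K2) then produces immediate neighbors $Y_1 \subset Y_2$ between $X$ and $Z_0$, giving $Y_1 \in \text{IP}(B)$ with $X \subseteq Y_1$, so $Y_1 \in \mathcal J$.

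Next, let $E$ be any maximal chain contained in $\mathcal J$ (existence by Zorn's Lemma). By hypothesis, $\cap_{Y \in E} Y \in B$, and clearly $X \subseteq \cap_{Y \in E} Y$. Suppose for contradiction that $X \subsetneq \cap_{Y \in E} Y$. Applying (K2) again between $X$ and $\cap_{Y \in E} Y$ yields an immediate successor of some element, hence some $Y_1 \in \text{IP}(B)$ with $X \subseteq Y_1 \subsetneq \cap_{Y \in E} Y$. Then $Y_1 \in \mathcal J$ and $E \cup \{Y_1\}$ is a chain in $\mathcal J$ strictly larger than $E$, contradicting maximality of $E$. Therefore $\cap_{Y \in E} Y = X$, and since $\mathcal J$ itself contains every such $E$, we also get $\cap_{Y \in \mathcal J} Y = X$.

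Finally, for uniqueness, any final subset $\mathcal J' \subseteq \text{IP}(B)$ with $X = \cap_{Y \in \mathcal J'} Y$ must satisfy $X \subseteq Y$ for every $Y \in \mathcal J'$, which by definition of $\mathcal J$ gives $\mathcal J' \subseteq \mathcal J$; thus $\mathcal J$ is the unique maximal such set. The only point requiring any care is the contradiction step: one must invoke (K2) for the pair $X \subsetneq \cap_{Y \in E} Y$ to extract an element of $\text{IP}(B)$ strictly below $\cap_{Y \in E} Y$ but above (or equal to) $X$, which is exactly the role (K2) played in the original proof.
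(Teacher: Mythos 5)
Your proof is correct and is exactly the intended argument: the paper states this theorem without proof as the dual of Theorem \ref{union of initial subset of IS gives every nonminimal element}, and your dualization (taking $\mathcal J=\{Y\in\text{IP}(B)\mid X\subseteq Y\}$, using (K2) for nonemptiness and for the contradiction with a maximal chain $E$, then noting any other final subset with intersection $X$ lies in $\mathcal J$) mirrors the paper's proof of that theorem step by step.
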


\begin{rem} Note that the assumptions in the two previous theorems are weaker than the assumption that $B$ is closed under
unions and intersections of nonempty chains, in view of Lemma \ref{kap1 implies kap2}.
\end{rem}
The following lemma is obvious, but will be useful.

\begin{lem} \label{if B is a chain the the union of immediate successors does not reach X} Assume that $B$ is a chain. If $X \in \text{IS}(B)$ then the set $\cup_{Y \in \text{IS}(B), Y \subset X}Y$ is strictly contained in $X$.
If $X \in \text{IP}(B)$ then the set $\cap_{Y \in \text{IP}(B), Y \supset X}Y$ strictly contains $X$.

\end{lem}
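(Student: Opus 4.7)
My plan is to exploit the immediate neighbor in the hypothesis directly, since in a chain the existence of such a neighbor forces a sharp cutoff for everything strictly below (or strictly above) a fixed element.

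For the first assertion, I would start by using $X \in \text{IS}(B)$ to obtain an immediate predecessor $W \in B$ with $W \subset X$. I then pick an arbitrary $Y \in \text{IS}(B)$ with $Y \subset X$ and compare $Y$ with $W$: since $B$ is a chain, either $Y \subseteq W$ or $W \subset Y$. The case $W \subset Y \subset X$ would place $Y$ strictly between the immediate neighbors $W$ and $X$, which is impossible, so $Y \subseteq W$. Taking the union over all such $Y$ gives $\bigcup_{Y \in \text{IS}(B),\, Y \subset X} Y \subseteq W \subset X$, which is the desired strict inclusion.

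The second assertion follows by the obvious dual argument: $X \in \text{IP}(B)$ furnishes an immediate successor $W'$ of $X$, and for any $Y \in \text{IP}(B)$ with $Y \supset X$ the chain hypothesis together with the impossibility of $X \subset Y \subset W'$ forces $W' \subseteq Y$. Intersecting over all such $Y$ yields $\bigcap_{Y \in \text{IP}(B),\, Y \supset X} Y \supseteq W' \supset X$.

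There is no real obstacle here; the only thing to be careful about is checking that the impossible ``sandwich'' case $W \subset Y \subset X$ (respectively $X \subset Y \subset W'$) really is ruled out by the definition of immediate neighbor, but this is immediate. The whole argument is a two-line consequence of the chain condition together with the definitions of $\text{IS}(B)$ and $\text{IP}(B)$.
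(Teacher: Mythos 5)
Your argument is correct: in a chain, any $Y \in \text{IS}(B)$ with $Y \subset X$ must satisfy $Y \subseteq W$ for the immediate predecessor $W$ of $X$ (the sandwich $W \subset Y \subset X$ being excluded), so the union sits inside $W \subset X$, and dually for the intersection. The paper states this lemma without proof as obvious, and your two-line argument is exactly the intended one.
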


When $B$ is a chain, we obtain the following useful connection between $B$ and the set of all cuts
of $\text{IS}(B)$ (and the set of all cuts of $\text{IP}(B)$).

\begin{thm} \label{when B is a chain}
Assume that $B$ satisfies (K2). Assume that either (i) $B$ has a smallest member, and
for every nonempty chain $ C \subseteq  \text{IS}(B) $, $\cup_{X \in  C} X \in  B$, or (ii) $B$ has a greatest member, and
for every nonempty chain $ C \subseteq  \text{IP}(B) $, $\cap_{X \in  C} X \in  B$.
If $B$ is a chain then $B$ is order isomorphic to $D(\text{IS}(B))$ and to $D(\text{IP}(B))$.
\end{thm}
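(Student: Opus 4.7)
The plan is to construct an explicit order-preserving bijection $\Phi\colon B\to D(\text{IS}(B))$ under hypothesis (i); once this is achieved, the companion isomorphism $B\cong D(\text{IP}(B))$ will follow from a natural order bijection between $\text{IS}(B)$ and $\text{IP}(B)$, and case (ii) will be fully dual.

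For $X\in B$, set
$$\Phi(X)=\bigl(\{Y\in\text{IS}(B):Y\subseteq X\},\ \{Y\in\text{IS}(B):X\subset Y\}\bigr).$$
Because $B$ is a chain, every $Y\in\text{IS}(B)$ is comparable with $X$, so $\Phi(X)$ is a genuine cut and $\Phi$ is manifestly order preserving. For injectivity I would apply (K2) to any $X_1\subset X_2$ in $B$: it produces immediate neighbors $Y_1\subset Y_2$ with $X_1\subseteq Y_1\subset Y_2\subseteq X_2$, and $Y_2\in\text{IS}(B)$ then witnesses $\Phi(X_1)\neq\Phi(X_2)$. For surjectivity, fix $\mathcal{A}\in D(\text{IS}(B))$: when $\mathcal{A}^L=\emptyset$, the smallest element $X_0$ of $B$ given by (i) has no predecessor in $B$ and so lies outside $\text{IS}(B)$, forcing $\Phi(X_0)=\mathcal{A}$; when $\mathcal{A}^L\neq\emptyset$, I would set $X:=\bigcup_{Y\in\mathcal{A}^L}Y$, which lies in $B$ by hypothesis (i) since $\mathcal{A}^L$ is automatically a chain in $\text{IS}(B)$. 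The containment $\mathcal{A}^L\subseteq\Phi(X)^L$ is immediate. For the reverse, if some $Y\in\text{IS}(B)\cap\mathcal{A}^R$ satisfied $Y\subseteq X$, then $Y$ would contain every $Y'\in\mathcal{A}^L$ and hence equal $X$, placing $X$ in $\text{IS}(B)$; Lemma \ref{if B is a chain the the union of immediate successors does not reach X} would then give $X=\bigcup_{Y'\in\mathcal{A}^L}Y'\subseteq\bigcup_{Y''\in\text{IS}(B),\,Y''\subset X}Y''\subset X$, absurd. Since both $B$ and $D(\text{IS}(B))$ are chains, this order-preserving bijection is automatically an order isomorphism.

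For $B\cong D(\text{IP}(B))$ under (i), I would observe that in a chain each $Z\in\text{IS}(B)$ has a unique immediate predecessor in $B$, which necessarily lies in $\text{IP}(B)$; the resulting map $\sigma\colon\text{IS}(B)\to\text{IP}(B)$ is an order isomorphism (its inverse sends $W\in\text{IP}(B)$ to its immediate successor in $B$), inducing $D(\text{IS}(B))\cong D(\text{IP}(B))$ and hence the desired isomorphism. Case (ii) is handled fully dually, using the greatest element of $B$, intersections of chains in $\text{IP}(B)$, and Theorem \ref{intersection of final subset of IP gives every nonmaximal element} in place of Theorem \ref{union of initial subset of IS gives every nonminimal element}.

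The principal difficulty lies in the reverse inclusion $\Phi(X)^L\subseteq\mathcal{A}^L$ of the surjectivity step: one must rule out the possibility that $X=\bigcup_{Y\in\mathcal{A}^L}Y$ ``overshoots'' into $\mathcal{A}^R$. This is precisely where the chain hypothesis on $B$ (beyond (K2) alone) becomes indispensable, for it provides both a unique immediate predecessor in $B$ for each member of $\text{IS}(B)$ and, via Lemma \ref{if B is a chain the the union of immediate successors does not reach X}, the structural fact that no element of $\text{IS}(B)$ can coincide with the union of its strict $\text{IS}(B)$-predecessors.
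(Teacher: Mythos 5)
Your proof is correct, and it is essentially the paper's isomorphism built from the other side: you map $X \in B$ to the cut $\{Y\in\text{IS}(B):Y\subseteq X\}$, whereas the paper defines the inverse map $\varphi:D(\text{IS}(B))\to B$, $\mathcal A\mapsto\bigcup_{X\in\mathcal A^L}X$ (with the empty cut sent to the smallest member). The real difference is in what carries the load: the paper gets surjectivity of $\varphi$ immediately from Theorem \ref{union of initial subset of IS gives every nonminimal element} (every non-minimal element of $B$ is the union of the initial subset of $\text{IS}(B)$ below it) and injectivity from Lemma \ref{if B is a chain the the union of immediate successors does not reach X}, so its proof is two lines on top of the section's structural theorem. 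You bypass Theorem \ref{union of initial subset of IS gives every nonminimal element} entirely: injectivity of your $\Phi$ comes straight from (K2) (the immediate successor $Y_2$ separates $X_1$ from $X_2$), and surjectivity is a direct argument using the closure-under-unions hypothesis plus Lemma \ref{if B is a chain the the union of immediate successors does not reach X} to rule out the union landing in $\mathcal A^R$ — your correct identification of this as the delicate point. The result is a more self-contained proof that exploits the chain hypothesis (maximal chains in an initial subset of a chain are the whole subset, so the general theorem is not needed), at the cost of redoing in this special case work the paper has already packaged in general form; your reduction of $D(\text{IP}(B))$ to $D(\text{IS}(B))$ via the immediate-predecessor bijection, and the dual treatment of case (ii), coincide with the paper's.
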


\begin{proof} Assume (i) and define $\varphi: D(\text{IS}(B)) \rightarrow B$ by:
$$\varphi(\emptyset, \text{IS}(B))=X_0, $$ where $X_0$ is the smallest member of $B$; 
and for all $(\emptyset,\text{IS}(B)) \neq \mathcal A=(\mathcal A^L, \mathcal A^R) \in D(\text{IS}(B))$,
$$\varphi(\mathcal A)= \cup_{X \in \mathcal A^L} X .$$
By Theorem \ref{union of initial subset of IS gives every nonminimal element}, $\varphi$ is surjective.
By Lemma \ref{if B is a chain the the union of immediate successors does not reach X},
$\varphi$ is injective. Clearly, $\varphi$ is order preserving. Finally, to show that $B$ is order isomorphic
to $D(\text{IP}(B))$, use the natural order preserving bijection between $\text{IP}(B)$ and $\text{IS}(B)$.
Assuming (ii), the proof is very much alike and, of course, one can prove it
using Theorem \ref{intersection of final subset of IP gives every nonmaximal element}.


\end{proof}


\begin{rem} In view of Lemma \ref{kap1 implies kap2} and Theorem \ref{when B is a chain}, if $B$ is a chain
and is closed under unions and intersections of nonempty chains, then $B$ is
order isomorphic to $D(\text{IS}(B))$ and to $D(\text{IP}(B))$.
\end{rem}

\begin{rem} In Theorem \ref{when B is a chain}, if in (i) (resp. (ii)) $B$ does not have a smallest (resp. largest) element,
then $B$ is order isomorphic to $D(\text{IS}(B)) \setminus \{ (\emptyset, \text{IS}(B)) \}$ (resp. $D(\text{IP}(B)) \setminus \{ (\text{IP}(B),\emptyset) \}$).
\end{rem}

So, whenever $B$ satisfies (K2) and (i) or (ii) of Theorem \ref{when B is a chain}, $B$ is order isomorphic
to $D(\text{IS}(B))$ (and to $D(\text{IP}(B))$). It is natural to ask whether, whenever $B$ does not satisfy (K2), there
exists a subset $S \subseteq B$ such that $B$ is order isomorphic to $D(S)$. However, it is easy to see that such subset cannot exist
since a Dedekind totally ordered set satisfies (K2).

\section{A totally ordered set is order isomorphic to two special subsets of the prime spectrum of a valuation domain}

We start this section with an application of Theorem \ref{when B is a chain} to ring theory.


\begin{thm} \label{specOv is isomorphic to some dedekind set} Let $A$ be a ring (not necessarily commutative) such that $\text{Spec}(A)$ satisfies (K2).
If $\text{Spec}(A)$ is totally ordered
then
$\text{Spec}(A)$ is order isomorphic to $D(\text{IS}(\text{Spec}(A)))$ and to $D(\text{IP}(\text{Spec}(A)))$.
In particular, there exists a Dedekind totally ordered set which is
order isomorphic to $\text{Spec}(A)$.
\end{thm}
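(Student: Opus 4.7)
The plan is to apply Theorem \ref{when B is a chain} directly with $B := \text{Spec}(A)$, equipped with the partial order of inclusion. By hypothesis $B$ is a chain and satisfies (K2), so the only remaining task is to verify one of the auxiliary conditions (i) or (ii) of Theorem \ref{when B is a chain}. I would verify (i); the verification of (ii) is symmetric and could be used to give a second route.

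For condition (i), two facts about prime ideals need to be checked. First, that $\text{Spec}(A)$ has a smallest element: since $\text{Spec}(A)$ is totally ordered by containment, the intersection $\cap_{P \in \text{Spec}(A)} P$ (if $\text{Spec}(A)$ is nonempty; otherwise the theorem is vacuous) is itself a prime ideal, and it is clearly the smallest member of $\text{Spec}(A)$. Second, that for any nonempty chain $C \subseteq \text{IS}(\text{Spec}(A))$, the union $\cup_{P \in C} P$ lies in $\text{Spec}(A)$; since $C$ is also a chain in $\text{Spec}(A)$, this reduces to the classical fact that a union of a chain of proper prime ideals is again a proper prime ideal. Both facts hold for arbitrary (not necessarily commutative) rings with $1$, using the standard ideal-theoretic definition of prime.

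Once (i) is verified, Theorem \ref{when B is a chain} immediately yields order isomorphisms $\text{Spec}(A) \cong D(\text{IS}(\text{Spec}(A)))$ and $\text{Spec}(A) \cong D(\text{IP}(\text{Spec}(A)))$. The ``in particular'' clause follows at once: by definition, a set of the form $D(U)$ is a Dedekind totally ordered set, so $D(\text{IS}(\text{Spec}(A)))$ witnesses the existence of a Dedekind totally ordered set order isomorphic to $\text{Spec}(A)$.

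The main (and essentially only) obstacle is the verification of the closure of $\text{Spec}(A)$ under unions of chains in the noncommutative setting, but this is a standard argument: if $I, J$ are ideals with $IJ \subseteq \cup_{P \in C} P$ and neither $I$ nor $J$ is contained in the union, one picks $a \in I$, $b \in J$ outside the union, takes $P_0 \in C$ large enough so that $aAb \cap P_0$ witnesses a contradiction with primality of $P_0$; properness of the union follows from $1 \notin P$ for each $P \in C$. Apart from this, the argument is a direct application of the preceding general theorem.
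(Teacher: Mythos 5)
Your overall strategy (apply Theorem \ref{when B is a chain} to $B=\text{Spec}(A)$, then invoke the definition of a Dedekind totally ordered set for the ``in particular'' clause) matches the paper, but your verification of the hypotheses contains a genuine gap. You chose condition (i), which requires that for every nonempty chain $C\subseteq\text{IS}(\text{Spec}(A))$ the union $\cup_{P\in C}P$ is again a prime ideal, and you justify this by appealing to ``the classical fact that a union of a chain of proper prime ideals is again a proper prime ideal'' in arbitrary rings. That fact is false for noncommutative rings: this is precisely the ``union-prime'' phenomenon the paper discusses right after this theorem (citing [BRV]), where an ideal can be a union of a chain of primes without being prime. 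Your sketched argument also breaks down at exactly the expected point: from $aAb\subseteq\cup_{P\in C}P$ you only get that each element $axb$ lies in \emph{some} member of the chain, and when the chain has no maximum there is no single $P_0\in C$ containing all of $aAb$, so primality of $P_0$ cannot be invoked. Noncommutative primality is tested on sets $aAb$ (or products of ideals), not on single elements, and this is what fails to pass to unions.

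The paper avoids this by verifying condition (ii) instead, and the two conditions are not symmetric at the level of ring theory: the intersection of a nonempty chain of prime ideals \emph{is} prime in any ring (if $aAb\subseteq\cap_{P\in C}P$ and $a\notin P_0$ for some $P_0\in C$, then comparing each $P\in C$ with $P_0$ forces $b\in P$ for all $P$), and $\text{Spec}(A)$ has a greatest member, namely the unique maximal ideal (maximal ideals exist, are prime, and total order forces uniqueness, so every prime lies below it). With these two facts condition (ii) holds and Theorem \ref{when B is a chain} applies. So the fix is simple --- switch your verification from (i) to (ii) --- but as written your route only works in the commutative (or union-prime free) case, not under the stated hypotheses.
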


\begin{proof} An intersection of a nonempty chain of prime ideals of $A$ is a prime ideal of $A$,
and $A$ has a unique maximal ideal.
The assertion now follows from Theorem \ref{when B is a chain}.


\end{proof}



Recall from [BRV] that an ideal is called union-prime if it is a union of a chain of primes, but is
not prime. Following this definition, we define a ring to be union-prime free if it has no
union-prime ideals. Of course, any commutative ring is union-prime free and any ring
satisfying the ascending chain condition on prime ideals is union-prime free. 

It is still not known whether the prime spectrum of any ring satisfies the property (K2).
However, by Lemma \ref{kap1 implies kap2}, the prime spectrum of any union-prime free ring does satisfy (K2).
We will show in a subsequent paper that the class of rings whose prime spectra satisfy (K2) strictly contains the class
of union-prime free rings.

We note in passing that it follows from [Ba, Proposition 2.2 and Example 3.4] that the class of domains with totally ordered prime spectra strictly contains the class of valuation domains.
Of course, Theorem \ref{specOv is isomorphic to some dedekind set} applies to discrete valuation domains.
We deduce the converse in Corollary \ref{DT corresponds to a valuation domain},
after proving in Theorem \ref{T and IPSpec} one of the main results of this paper:
every totally ordered set is order isomorphic to the set of all prime ideals of $O_{v}$ having an immediate successor,
for some discrete valuation domain $O_{v}$.




It is well known that for every totally ordered abelian group $\Delta$ there exists a valuation domain whose value group is $\Delta$.
(See, for several constructions, [Bo, no. 3, Section 4, Example 6], [Ef, Example 4.2.4] and [Sc, Chapter 1, Section 6, Example 4]). 
Now, let $T$ be a totally ordered set.
Note that if $T=\emptyset$ then $D(T)=\{(\emptyset, \emptyset)\}$; in this case the prime spectrum of any field
is isomorphic to $D(T)$. So, we assume from now on that $T$ is not empty.
Let $G$ be a totally ordered group of rank one.
We consider the group $G^T$ with addition defined componentwise. Let $\Gamma$ denote the subgroup of $ G^T$ of
all $f \in G^T$ having a well ordered support, where the support of $f$ is $supp(f)=\{t \in T \mid f(t) \neq 0\}$.
We define a total ordering on $\Gamma$ (a left to right lexicographic order): for all $f,g \in \Gamma$, $f \leq g$ if $f=g$ or there exists
a smallest $t \in supp(f) \cup supp(g)$ such that $f(t)<g(t)$.
We denote by $O_{v}$ a valuation domain whose value group is $\Gamma$.

Recall that an isolated subgroup $H$ of $\Gamma$ is a subgroup of $\Gamma$ such that for every $0 \leq h \in H$
and for every $g \in \Gamma$, $0 \leq g \leq h$, one has $g \in H$.
It is well known that there exists
a one-to-one order reversing correspondence between the set of all prime ideals of a valuation
domain and the set of all isolated subgroups of the value group (see [End, p. 47]). Thus, we shall obtain several results
regarding isolated subgroups of $\Gamma$, and then use the correspondence mentioned above.

To my knowledge, the first to suggest the natural generalization of a discrete valuation domain was Dai in [Da].
He defined a discrete valuation domain of arbitrary rank
(whereas most of the authors discuss rank one discrete valuation domains, or finite rank) as a valuation domain such that for each pair of
prime ideals which are immediate neighbors (prime ideals $P_1 \supset P_2$ such that there is no prime ideal between them), their associated isolated subgroups $H_2 \supset H_1$, satisfy $H_{2}/H_{1} \approx \Bbb Z$. Equivalently, a discrete valuation domain is a valuation domain such that for each pair of
prime ideals $P_1 \supset P_2$ which are immediate neighbors, the ring $(O_{v}/P_{2})_{P_{1}/P_{2}}$ is a discrete rank one valuation domain.
As a side note, we mention that the term generalized discrete valuation ring was introduced in [Br]:
Brungs defined a generalized discrete valuation ring as a ring whose right ideals are well ordered by reverse inclusion.






\begin{rem} Note that, taking $G=\Bbb Z$ in our construction,
we obtain a discrete valuation domain, in view of the generalized definition.
\end{rem}


Denote by isolated$(\Gamma)$ the set of all isolated subgroups of $\Gamma$ ordered by inclusion.

We characterize now the isolated subgroups of $\Gamma$. We start by defining two types of isolated subgroups of $\Gamma$.
First, we define the {\it $T$-isolated subgroups}:
for all $t \in T$ let
$$H_{t}=\{ f \in \Gamma \mid f(s)=0 \ \text{ for all } s\in T \text{ such that } s<t \}. $$

Next, we define the {\it dual $T$-isolated subgroups}:
for all $t \in T$ let
$$dH_{t}=\{ f \in \Gamma \mid f(s)=0 \ \text{ for all } s\in T \text{ such that } s \leq t \}. $$

The following observation will be useful.

\begin{lem} \label{f in H ft neq 0 implies H contains Ht} Let $H$ be an isolated subgroup of $\Gamma$,
 let $f \in H$, and let $t \in T$. If $f(t) \neq 0$ then $H_t \subseteq H$. In particular, for all $j \geq t$, $H_j \subseteq H$.

\end{lem}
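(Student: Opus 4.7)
The plan is to leverage two features of $\Gamma$: the lexicographic comparison rule, and the fact that the coordinate group $G$ is Archimedean (which follows from $G$ having rank one). For each $h \in H_t$ I aim to produce a positive element of $H$ that strictly dominates $|h|$, and then invoke the isolation of $H$.

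First I would pass to $|f|$ (which lies in $H$ since $H$ is a subgroup), so that $|f| \geq 0$, and observe that $t \in \text{supp}(|f|)$, so the well-ordered set $\text{supp}(|f|)$ has a minimum $s_0$ with $s_0 \leq t$. Given any $h \in H_t$, I would similarly pass to $|h|$, write $t'$ for $\min \text{supp}(|h|)$, and note that $t' \geq t$ by the defining condition of $H_t$. Combining, this gives the chain of inequalities $s_0 \leq t \leq t'$ in $T$.

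The central step is to find $n \in \mathbb{N}$ with $n|f| > |h|$ in $\Gamma$. I would split on whether $s_0 < t'$ or $s_0 = t'$. The first case is essentially free: the smallest index in $\text{supp}(|f|) \cup \text{supp}(|h|)$ is $s_0$, and at this index $|f|(s_0) > 0 = |h|(s_0)$, so $n=1$ works by a single lex comparison. The second case forces $s_0 = t = t'$, making $|f|(t)$ and $|h|(t)$ both strictly positive elements of $G$; here I invoke the Archimedean property of $G$ to find $n \geq 1$ with $n \cdot |f|(t) > |h|(t)$, and then $n|f| > |h|$ again by a lex comparison at the index $t$.

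Once this bound is in hand, the isolation of $H$ finishes the argument: $0 \leq |h| \leq n|f|$ with $n|f| \in H$ forces $|h| \in H$, and hence $h \in H$. The ``in particular'' clause is then immediate, since $j \geq t$ gives $H_j \subseteq H_t$ directly from the definition of the $H_\bullet$. The only genuine use of a hypothesis beyond bookkeeping with the lex order is in the second case, where the rank-one assumption on $G$ is essential; without Archimedicity one could not dominate $|h|(t)$ by an integer multiple of $|f|(t)$, and so this is the step I expect to be the main (though still mild) obstacle.
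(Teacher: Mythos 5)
Your argument is correct and follows essentially the same route as the paper's proof: compare a positive element of $H_t$ with $\pm f$ via the leading (smallest-support) coordinate, splitting into the case where $f$'s leading index lies strictly below and the case where it equals $t$, and in the latter case use the rank-one (Archimedean) property of $G$ to dominate by an integer multiple of $f$, then apply isolation of $H$. The only difference is cosmetic (working with $|f|,|h|$ and the integer $n$ rather than choosing between $f$ and $-f$ and $z\in\Bbb Z$), plus the trivial unstated case $h=0$.
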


\begin{proof} Let $g \in H_t$ be a positive element; then $g(s)=0$ for every $s<t$. 
Now, let $k$ be the smallest element in the support of $f$. By the assumption on $f$, $k \leq t$. If $k<t$ then clearly
$g<f$ or $g<-f$ and thus $g \in H$. If $k=t$ then, since $G$ is of rank one,
there exists $z \in \Bbb Z$ such that $g(t) \leq zf(t) \in H$; thus $g \leq zf$ and $g \in H$.

\end{proof}

\begin{lem} \label{ISisolated=H_t} $\text{IS}(\text{isolated}(\Gamma))=\{ H_{t} \mid t \in T\} $; i.e., the set of all isolated subgroups of $\Gamma$
having an immediate predecessor is equal to the set of all $T$-isolated subgroups of $\Gamma$. 

\end{lem}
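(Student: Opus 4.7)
The plan is to prove the equality by showing two containments, using the dual $T$-isolated subgroups $dH_t$ as explicit immediate predecessors and exploiting the fact that the isolated subgroups of $\Gamma$ form a chain under inclusion.

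First I would verify the well-known fact that $\text{isolated}(\Gamma)$ is totally ordered by inclusion: given isolated subgroups $A$ and $B$, if $A\not\subseteq B$ pick $a\in A\setminus B$ with $a>0$; for any positive $b\in B$, isolation forces $b\le a$, and then isolation of $A$ gives $b\in A$, so $B\subseteq A$. This will be used crucially in the second containment.

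For $\{H_t\}\subseteq \text{IS}(\text{isolated}(\Gamma))$, I would verify that, for every $t\in T$, the set $dH_t$ is an isolated subgroup, that $dH_t\subsetneq H_t$ (any $f\in\Gamma$ supported only at $\{t\}$ lies in $H_t$ but not in $dH_t$), and that $dH_t$ is actually an immediate predecessor of $H_t$. The last point is the cleanest step: if $dH_t\subsetneq K\subseteq H_t$ with $K$ isolated, choose $f\in K\setminus dH_t$; since $f\in H_t$ we have $f(s)=0$ for $s<t$, and since $f\notin dH_t$ we have $f(t)\ne 0$, so by Lemma \ref{f in H ft neq 0 implies H contains Ht} applied to $K$ we get $H_t\subseteq K$, forcing $K=H_t$.

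For the reverse containment $\text{IS}(\text{isolated}(\Gamma))\subseteq\{H_t\}$, suppose $H\in\text{isolated}(\Gamma)$ has immediate predecessor $H'$. Pick any $f\in H\setminus H'$ and let $t$ be the smallest element of $\mathrm{supp}(f)$ (which exists because supports are well ordered). By construction $f\in H_t$, and by Lemma \ref{f in H ft neq 0 implies H contains Ht} we also have $H_t\subseteq H$. Now $f\in H_t\setminus H'$, so $H_t\not\subseteq H'$; by the chain property of isolated subgroups this forces $H'\subsetneq H_t$. Since $H_t\subseteq H$ and $H',H$ are immediate neighbors, we conclude $H_t=H$, as required.

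I do not expect any serious obstacle here: the argument essentially repackages the previous lemma together with the chain structure of isolated subgroups. The one place that requires a bit of care is confirming that $dH_t$ really is an isolated subgroup (i.e., stable under the ordering and containing all smaller positive elements), and that one can always locate a minimal element in the support of $f$, which is automatic from the well-ordered support condition built into $\Gamma$.
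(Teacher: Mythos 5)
Your proof is correct and follows essentially the paper's argument: both inclusions rest on Lemma \ref{f in H ft neq 0 implies H contains Ht}, and your candidate immediate predecessor $dH_t$ is the same subgroup as the paper's $\cup_{k>t}H_{k}$ (equal to $\{0\}$ when $t$ is maximal), so your uniform treatment merely avoids the paper's case split at a maximal $t$. The $(\subseteq)$ direction is identical to the paper's, with the chain property of $\text{isolated}(\Gamma)$, which the paper uses implicitly, made explicit.
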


\begin{proof} $(\supseteq)$ Let $H=H_{t} $ be a $T$-isolated subgroup of $\Gamma$. If $t$ is the maximal element
of $T$ then by Lemma \ref{f in H ft neq 0 implies H contains Ht}, $H_{t} $ is the minimal nonzero isolated subgroup of $\Gamma$,
and thus $H_t$ is an immediate successor of the zero isolated subgroup. So, we may assume that $t$ is not the maximal element
of $T$; we prove that
$\cup_{k>t }H_{k}$ is an immediate predecessor of $H_{t}$. First, it is easy to see that $\cup_{k>t }H_{k} \subset H_t$;
indeed, there exists $f \in H_t$ such that $f(t) \neq 0$
and clearly $f \notin \cup_{k>t }H_{k}$. 
Next, we prove that for every isolated subgroup
$\cup_{k>t }H_{k} \subset H'$, one has $H_{t} \subseteq H'$. Let $H'$ be any isolated subgroup of $\Gamma$ strictly
containing $\cup_{k>t }H_{k}$ and let $g \in H' \setminus \cup_{k>t }H_{k}$; 
then there exists $j \in T$ such that  $j \leq t$ and $g(j) \neq 0$. 
Therefore, by Lemma \ref{f in H ft neq 0 implies H contains Ht}, $H_{t} \subseteq H'$. Hence,
$\cup_{k>t }H_{k}$ and $H_{t}$ are immediate neighbors in isolated$(\Gamma)$.

$(\subseteq)$ Let $H \in \text{IS}(\text{isolated}(\Gamma))$; then there exists an immediate predecessor $H'$ of $H$.
Let $f \in H \setminus H'$ and let $k \in T$ be the smallest element for which
$f(k) \neq 0$. Then $f \in H_k \setminus H'$ and thus $H' \subset H_{k}$; by Lemma
\ref{f in H ft neq 0 implies H contains Ht},  $ H_{k} \subseteq H$.
Therefore $H=H_{k}$.

\end{proof}

We present here, without a proof, the dual of Lemma \ref{ISisolated=H_t}: $$\text{IP}(\text{isolated}(\Gamma))=\{ dH_{t} \mid t \in T\}.$$

So, we have proved one of the main results of this paper:

\begin{thm} \label{T and IPSpec} There exists an order reversing bijection between $T$ and $\text{IS}(\text{isolated}(\Gamma))$,
defined by $t \rightarrow H_t$ for every $t \in T$.
Thus, there exists an isomorphism of totally ordered sets between $T$ and $\text{IP}( {\text Spec}(O_{v}) )$
(and hence between $T$ and $\text{IS}( {\text Spec}(O_{v}) ))$.


\end{thm}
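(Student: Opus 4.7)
The plan is to build the claimed isomorphism by first working at the level of isolated subgroups and then transporting everything to $\text{Spec}(O_v)$ via the standard order-reversing correspondence. The first step is to show that the map $\psi : T \to \text{IS}(\text{isolated}(\Gamma))$ defined by $t \mapsto H_t$ is an order-reversing bijection. Surjectivity is exactly Lemma \ref{ISisolated=H_t}, so the substance is verifying strict order reversal: if $t_1 < t_2$ in $T$ then $H_{t_2} \subsetneq H_{t_1}$. The inclusion $H_{t_2} \subseteq H_{t_1}$ is immediate from the definition, since anything vanishing on $\{s : s < t_2\}$ vanishes on the smaller set $\{s : s < t_1\}$. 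For strictness, pick any nonzero $g \in G$ and define $f \in G^{T}$ by $f(t_1) = g$ and $f(s) = 0$ otherwise; its support is the singleton $\{t_1\}$, which is well ordered, so $f \in \Gamma$. Clearly $f \in H_{t_1}$, but $f(t_1) \neq 0$ while $t_1 < t_2$ forces $f \notin H_{t_2}$. This simultaneously yields injectivity and order-reversal of $\psi$.

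Second, I would invoke the order-reversing bijection $\alpha : \text{Spec}(O_v) \to \text{isolated}(\Gamma)$ cited from [End, p.~47]. Any order-reversing bijection between posets carries immediate neighbors to immediate neighbors with predecessor and successor exchanged; hence $\alpha$ restricts to an order-reversing bijection $\text{IP}(\text{Spec}(O_v)) \to \text{IS}(\text{isolated}(\Gamma))$. Composing the inverse of this restriction with $\psi$ gives an order-preserving bijection $T \to \text{IP}(\text{Spec}(O_v))$, which is the main conclusion of the theorem.

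For the parenthetical claim, I would use that in any chain $C$ there is a canonical order-preserving bijection $\text{IP}(C) \to \text{IS}(C)$ sending each element to its (unique) immediate successor; order preservation uses only totality of $C$. Applying this to $C = \text{Spec}(O_v)$ and composing with the isomorphism just constructed yields $T \cong \text{IS}(\text{Spec}(O_v))$. The argument contains no real obstacle once Lemma \ref{ISisolated=H_t} is available; the only point requiring attention is producing the strictness witness in the first step, and the singleton-support element $f$ constructed above makes it transparent. As an alternative route for the last step, one could instead mirror the proof of Lemma \ref{ISisolated=H_t} using the dual $T$-isolated subgroups $dH_t$ to characterize $\text{IP}(\text{isolated}(\Gamma))$ and then reach $\text{IS}(\text{Spec}(O_v))$ directly via $\alpha$.
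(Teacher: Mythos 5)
Your proposal is correct and follows essentially the same route as the paper, which derives the theorem directly from Lemma \ref{ISisolated=H_t} together with the standard order-reversing correspondence between isolated subgroups and prime ideals (the paper merely leaves implicit the details you spell out: strict order reversal of $t \mapsto H_t$ via a singleton-support witness, and the canonical bijection between $\text{IP}$ and $\text{IS}$ of a chain, which the paper itself invokes in Theorem \ref{when B is a chain}).
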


Now, we can prove that any nonzero isolated subgroup of $\Gamma$ is of the form $\cup_{t \in \mathcal A^R}H_{t}$
for some cut $\mathcal A=(\mathcal A^L, \mathcal A^R)$ of $T$.

\begin{prop} \label{H=cupH_t} Let $H \neq \{ 0 \}$ be an isolated subgroup of $\Gamma$. Then there exists a final subset $\mathcal J$ of $T$
such that $H=\cup_{t \in \mathcal J}H_{t}$.

\end{prop}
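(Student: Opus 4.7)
The plan is to apply Theorem \ref{union of initial subset of IS gives every nonminimal element} to the poset $B = \text{isolated}(\Gamma)$ ordered by inclusion, and then translate the resulting initial subset of $\text{IS}(B)$ into a final subset of $T$ via the order-reversing bijection supplied by Theorem \ref{T and IPSpec}.

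First I would verify the hypotheses of Theorem \ref{union of initial subset of IS gives every nonminimal element}. The set $B = \text{isolated}(\Gamma)$ is totally ordered by inclusion (since the prime ideals of the valuation domain $O_v$ form a chain, and isolated subgroups correspond to prime ideals), and it is straightforwardly closed under arbitrary unions and intersections of nonempty chains of isolated subgroups. By Lemma \ref{kap1 implies kap2}, this implies that $B$ satisfies (K2). Moreover, $\{0\}$ is the smallest member of $B$, so the hypothesis ``every nonempty chain in $\text{IS}(B)$ has its union in $B$'' is satisfied a fortiori.

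Next, since $H \neq \{0\}$, the element $H$ is non-minimal in $B$. Theorem \ref{union of initial subset of IS gives every nonminimal element} then yields a unique maximal nonempty initial subset $\mathcal{I}$ of $\text{IS}(B)$ such that $H = \cup_{Y \in \mathcal{I}} Y$. By Lemma \ref{ISisolated=H_t}, $\text{IS}(B) = \{H_t \mid t \in T\}$, so I can define $\mathcal{J} = \{t \in T \mid H_t \in \mathcal{I}\}$, which gives the representation $H = \cup_{t \in \mathcal{J}} H_t$.

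It remains to confirm that $\mathcal{J}$ is a final subset of $T$. Since $H_{t'} \subseteq H_t$ exactly when $t' \geq t$, the map $t \mapsto H_t$ is order-reversing from $T$ onto $\text{IS}(B)$ (this is precisely the content of Theorem \ref{T and IPSpec}). Under such a bijection, initial subsets of $\text{IS}(B)$ (with respect to containment) correspond bijectively to final subsets of $T$, so the initial subset $\mathcal{I}$ translates to the final subset $\mathcal{J}$. I do not expect any serious obstacle here; the proposition is essentially a direct specialization of the abstract union theorem of Section 2 to the poset of isolated subgroups, combined with the identification of immediate successors already carried out in Lemma \ref{ISisolated=H_t}.
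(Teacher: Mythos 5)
Your proposal is correct and takes essentially the same route as the paper: apply Theorem \ref{union of initial subset of IS gives every nonminimal element} to $\text{isolated}(\Gamma)$, identify $\text{IS}(\text{isolated}(\Gamma))$ with $\{H_t \mid t \in T\}$ via Lemma \ref{ISisolated=H_t}, and convert the initial subset of $T$-isolated subgroups into a final subset of $T$ through the order-reversing map $t \mapsto H_t$. The only difference is that you spell out the verification of the hypotheses (closure under chain unions/intersections, (K2) via Lemma \ref{kap1 implies kap2}, non-minimality of $H$), which the paper leaves implicit.
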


\begin{proof} 
By Lemma \ref{ISisolated=H_t}, $\text{IS}(\text{isolated}(\Gamma))=\{ H_{t} \mid t \in T\} $.
By Theorem \ref{union of initial subset of IS gives every nonminimal element}, there exists a nonempty initial subset $\mathcal I$ of
$\{ H_{t} \mid t \in T\}$ such that $H=\cup_{H_t \in \mathcal I}H_{t}$. Let $\mathcal J=\{ t\in T \mid H_t \in \mathcal I\}$;
then $\mathcal J$ is a final subset of $T$ and $H=\cup_{t \in \mathcal J}H_{t}$.


\end{proof}

\begin{cor} \label{DT corresponds to a valuation domain} Let $T$ be a totally ordered set. Then there exists a discrete valuation domain $O_{v}$ such that $\text{Spec}(O_{v})$ is isomorphic to $D(T)$ (as totally ordered sets).



\end{cor}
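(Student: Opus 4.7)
The plan is to combine the two main results that have just been established. First I would dispatch the trivial case $T=\emptyset$: as the paper already notes, $D(\emptyset)=\{(\emptyset,\emptyset)\}$ is a singleton, so the prime spectrum of any field serves as $\text{Spec}(O_v)$.

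For nonempty $T$, I would take $G=\mathbb{Z}$ in the construction introduced just before Theorem \ref{T and IPSpec}, form the lex-ordered group $\Gamma$ of well-supported $\mathbb{Z}$-valued functions on $T$, and let $O_v$ be a valuation domain with value group $\Gamma$. By the Remark immediately after that construction, this $O_v$ is a discrete valuation domain in the sense of Dai, so the first requirement of the corollary (discreteness) is built in from the outset.

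Next I would verify the hypotheses of Theorem \ref{specOv is isomorphic to some dedekind set} for $A=O_v$. Since $O_v$ is a (commutative) valuation domain, $\text{Spec}(O_v)$ is totally ordered, and commutativity forces $\text{Spec}(O_v)$ to be union-prime free, hence to satisfy (K2) by Lemma \ref{kap1 implies kap2}. Applying Theorem \ref{specOv is isomorphic to some dedekind set} therefore yields an order isomorphism
\[
\text{Spec}(O_v) \ \cong\ D(\text{IS}(\text{Spec}(O_v))).
\]
Then Theorem \ref{T and IPSpec} supplies an order isomorphism $T\cong \text{IS}(\text{Spec}(O_v))$ (the theorem produces an order-reversing bijection $T\to \text{IS}(\text{isolated}(\Gamma))$, which, composed with the order-reversing correspondence between isolated subgroups of $\Gamma$ and prime ideals of $O_v$, becomes order preserving onto $\text{IS}(\text{Spec}(O_v))$). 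Since the functor $D(\,\cdot\,)$ sends order isomorphisms to order isomorphisms, chaining the two displayed isomorphisms gives $\text{Spec}(O_v)\cong D(T)$, as required.

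The argument is essentially a bookkeeping combination of Theorems \ref{specOv is isomorphic to some dedekind set} and \ref{T and IPSpec}, so there is no substantive obstacle; the only point that deserves care is matching the direction of the various order-preserving versus order-reversing bijections (prime ideals versus isolated subgroups of $\Gamma$, and $t\mapsto H_t$ being order reversing) so that the final composite is genuinely an isomorphism of totally ordered sets rather than an anti-isomorphism. Once that is checked the corollary follows immediately.
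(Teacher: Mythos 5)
Your argument is correct, but it is organized differently from the paper's. The paper proves the corollary by writing down an explicit map: it defines $\varphi: D(T) \rightarrow \text{isolated}(\Gamma)$ by $\varphi(\mathcal A)= \cup_{t \in \mathcal A^R} H_t$ (and $\varphi(T,\emptyset)=\{0\}$), gets surjectivity from Proposition \ref{H=cupH_t}, injectivity from Lemmas \ref{ISisolated=H_t} and \ref{if B is a chain the the union of immediate successors does not reach X}, and then composes with the order-reversing correspondence $\psi$ to obtain the isomorphism $\mu=\psi\varphi$. You instead chain two already-proved isomorphism theorems: $\text{Spec}(O_v)\cong D(\text{IS}(\text{Spec}(O_v)))$ from Theorem \ref{specOv is isomorphic to some dedekind set} (after checking its hypotheses for $O_v$ via union-prime freeness of commutative rings and Lemma \ref{kap1 implies kap2}) and $T\cong \text{IS}(\text{Spec}(O_v))$ from Theorem \ref{T and IPSpec}, then invoke the obvious fact that $D(\,\cdot\,)$ preserves order isomorphisms. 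The underlying ingredients are the same (both routes ultimately rest on Theorem \ref{union of initial subset of IS gives every nonminimal element} and Lemma \ref{ISisolated=H_t}), but your version is shorter bookkeeping at the cost of explicitness: the paper's construction produces a concrete formula for the isomorphism, which it exploits afterwards (the remark identifying $\mu\mid_T$ with the map of Theorem \ref{T and IPSpec}, and the description of the $T$-prime ideals), whereas your composite is only given abstractly. One small inaccuracy in your parenthetical justification: composing the order-reversing bijection $t\mapsto H_t$ onto $\text{IS}(\text{isolated}(\Gamma))$ with the order-reversing correspondence $\psi$ yields an order-preserving bijection onto $\text{IP}(\text{Spec}(O_v))$, not onto $\text{IS}(\text{Spec}(O_v))$ (an isolated subgroup with an immediate predecessor corresponds to a prime with an immediate successor). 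This does not damage the proof, since Theorem \ref{T and IPSpec} also asserts $T\cong \text{IS}(\text{Spec}(O_v))$ via the shift bijection between $\text{IP}$ and $\text{IS}$ of a chain; alternatively you could pair $T\cong \text{IP}(\text{Spec}(O_v))$ with the isomorphism $\text{Spec}(O_v)\cong D(\text{IP}(\text{Spec}(O_v)))$, which Theorem \ref{specOv is isomorphic to some dedekind set} equally provides.
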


\begin{proof} Let $\Gamma$ be the totally ordered group defined earlier and let $O_{v}$ be a discrete valuation domain
whose value group is $\Gamma$.
 Let $\psi:\text{isolated}(\Gamma) \rightarrow \text{Spec}(O_{v})$ be the well-known order reversing bijection given by
$H \rightarrow \{ a \in O_{v} \mid v(a) \notin H\}$.
Let $\varphi:D(T) \rightarrow \text{isolated}(\Gamma)$ be the function defined by
$\varphi(\mathcal A)= \cup_{t \in \mathcal A^R} H_t$ for all $(T,\emptyset) \neq \mathcal A=(\mathcal A^L, \mathcal A^R) \in D(T)$
and $\varphi(T, \emptyset)=\{ 0\}$. By Proposition \ref{H=cupH_t}, $\varphi$ is surjective. By Lemma \ref{ISisolated=H_t} and Lemma
\ref{if B is a chain the the union of immediate successors does not reach X}, $\varphi$ is injective.
It is easy to see that $\varphi$ is order reversing.
Therefore, $\mu= \psi \varphi : D(T) \rightarrow \text{Spec}(O_{v})$ is an order preserving bijection.

\end{proof}

We note that Corollary \ref{DT corresponds to a valuation domain} (without the ``discrete" part) can be deduced from [Le, Corollary 3.6]
(which says that a totally ordered set $T$ is order isomorphic to the prime spectrum of some commutative valuation ring
iff $T$ satisfies (K1) and (K2)).

\begin{rem} Note that, viewing $T$ as a subset of $D(T)$ under the natural injection $\varphi_2$ presented in Remark \ref{def of phi},
 $\mu \mid_T : T \rightarrow \text{IP}({Spec}(O_{v}))$ is the isomorphism mentioned in Theorem \ref{T and IPSpec}.
\end{rem}

The symbol $\cong$ in the next corollary means order isomorphism.

\begin{cor} \label{equivalent conditions} Let $T$ be a totally ordered set. The following conditions are equivalent:

(a) $T \cong Spec(A_1)$ for some discrete valuation domain $A_1$.

(b) $T \cong Spec(A_2)$ for some commutative valuation ring $A_2$.

(c) $T \cong Spec(A_3)$ for some ring $A_3$ whose prime spectrum satisfies (K2) and is totally ordered.

(d) $T$ is a Dedekind totally ordered set.

(e) $T$ satisfies (K1) and (K2).

\end{cor}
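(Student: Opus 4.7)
The plan is to establish the cycle (a) $\Rightarrow$ (b) $\Rightarrow$ (c) $\Rightarrow$ (d) $\Rightarrow$ (a), together with (a) $\Rightarrow$ (e) and (e) $\Rightarrow$ (d); this closes the equivalence of the five conditions. The first four implications are either trivial or direct consequences of earlier results: (a) $\Rightarrow$ (b) because a discrete valuation domain is a commutative valuation ring; (b) $\Rightarrow$ (c) because the ideals of a commutative valuation ring are totally ordered by inclusion (so its prime spectrum is totally ordered), and because any commutative ring is union-prime free, whence Lemma \ref{kap1 implies kap2} gives (K2); (c) $\Rightarrow$ (d) is precisely Theorem \ref{specOv is isomorphic to some dedekind set}; and (d) $\Rightarrow$ (a) is precisely Corollary \ref{DT corresponds to a valuation domain}. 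The implication (a) $\Rightarrow$ (e) follows from Kaplansky's two theorems recalled in the introduction, since the prime spectrum of a commutative ring always satisfies (K1) and (K2).

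The one substantive step is (e) $\Rightarrow$ (d). Given $T$ satisfying (K1) and (K2), the plan is to show $T \cong D(\text{IS}(T))$, so that $T$ is Dedekind by definition. After dispensing with the trivial cases $|T| \leq 1$, one adapts the proof of Theorem \ref{when B is a chain} from a set of sets to an abstract totally ordered set by replacing every union (resp.\ intersection) of a chain by a supremum (resp.\ infimum). Explicitly, define $\varphi : D(\text{IS}(T)) \to T$ by $\varphi(\emptyset, \text{IS}(T)) = \min T$ (which exists by (K1)) and $\varphi(\mathcal A) = \sup \mathcal A^L$ otherwise (again by (K1)). Surjectivity is an abstract analogue of Theorem \ref{union of initial subset of IS gives every nonminimal element}: for a non-minimum $x \in T$, (K2) applied to $[x_0, x]$ for some $x_0 < x$ produces an element of $\text{IS}(T) \cap (-\infty, x]$, and a Zorn--plus--(K2) argument identical to the one given there forces $\sup\{ y \in \text{IS}(T) : y \leq x \} = x$. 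Injectivity is an abstract analogue of Lemma \ref{if B is a chain the the union of immediate successors does not reach X}: if $x \in \text{IS}(T)$ has immediate predecessor $x^-$, every $y \in \text{IS}(T)$ with $y < x$ satisfies $y \leq x^-$, so $\sup\{y \in \text{IS}(T) : y < x\} \leq x^- < x$, which separates the relevant cuts. Order preservation is clear.

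The only delicate issue is verifying the legitimacy of transcribing the Section~2 proofs into the abstract setting; but this is essentially automatic, since those arguments used unions and intersections of chains only in their role as suprema and infima with respect to the containment order. As a shortcut, one may instead invoke Lewis' Corollary~3.6 cited immediately after Corollary \ref{DT corresponds to a valuation domain} to obtain (e) $\Rightarrow$ (b) in a single step, thereby sidestepping the adaptation altogether.
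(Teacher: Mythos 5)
Your proof is correct, and for the main cycle it coincides with the paper: the implications $(a)\Rightarrow(b)\Rightarrow(c)$ are dismissed as easy, $(c)\Rightarrow(d)$ is Theorem \ref{specOv is isomorphic to some dedekind set}, and $(d)\Rightarrow(a)$ is Corollary \ref{DT corresponds to a valuation domain}, exactly as in the paper. Where you genuinely diverge is the treatment of (e): the paper disposes of it in one line by quoting $(b)\Leftrightarrow(e)$ from [Le, Corollary 3.6], whereas you prove $(a)\Rightarrow(e)$ via Kaplansky's two theorems and give a direct, self-contained argument for $(e)\Rightarrow(d)$ by showing $T\cong D(\text{IS}(T))$, transcribing Theorem \ref{union of initial subset of IS gives every nonminimal element}, Lemma \ref{if B is a chain the the union of immediate successors does not reach X} and Theorem \ref{when B is a chain} from families of sets to an abstract totally ordered set, with suprema replacing unions. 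That transcription is indeed legitimate, and in the totally ordered case it is even simpler than you indicate: no Zorn or maximal-chain argument is needed, since $\{y\in\text{IS}(T)\mid y\le x\}$ is itself a chain and (K2) applied to $\sup\{y\in\text{IS}(T)\mid y\le x\}<x$ gives the contradiction directly; your injectivity argument via the immediate predecessor $x^-$ is exactly the right abstract form of Lemma \ref{if B is a chain the the union of immediate successors does not reach X}. What your route buys is independence from Lewis' result (which the paper needs only for condition (e)) and the sharper order-theoretic statement that any $T$ with (K1) and (K2) satisfies $T\cong D(\text{IS}(T))$; what the paper's route buys is brevity, and your closing remark that one could instead cite [Le, Corollary 3.6] for $(e)\Rightarrow(b)$ is precisely the paper's choice. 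One small caveat shared by both treatments: with (K1) quantified over nonempty subsets, $T=\emptyset$ vacuously satisfies (e) but is neither Dedekind nor a spectrum, so the ``trivial case'' you dispense with should be $|T|=1$ only, with $T\neq\emptyset$ assumed throughout (as the paper does implicitly from Section 3 onward).
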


\begin{proof} The implications $(a)\Rightarrow(b)\Rightarrow(c)$ are trivial. $(c)\Rightarrow(d)$ is by Theorem
\ref{specOv is isomorphic to some dedekind set}, and $(d)\Rightarrow(a)$ is by Corollary
\ref{DT corresponds to a valuation domain}. $(b) \Leftrightarrow (e)$ is by [Le, Corollary 3.6].

\end{proof}

In particular, from the point of view of poset theory, there is no difference between the prime spectrum of a
discrete valuation domain and the prime spectrum of a noncommutative ring
whose prime spectrum satisfies (K2) and is totally ordered.







Let us give a slightly different perspective on the correspondence $\psi$ mentioned in the proof of
Corollary \ref{DT corresponds to a valuation domain}. We define the {\it $T$-prime ideals} of $O_{v}$: for every $t \in T$ let
$$P_{t}=\{ a \in O_{v} \mid \text{there exists } k \leq t \text{ such that } (v(a))(k) \neq 0 \}.$$
It is straightforward to verify that for each $t \in T$, $P_{t}$ is indeed a prime ideal of $O_{v}$.
Moreover, it is not difficult to see that for each non-maximal $t \in T$, one has $\psi(\cup_{j>t}H_j)=P_{t}$;
if $T$ has a maximal element, say $t_0$, then $\psi(\{0\})$, which is the maximal ideal of $O_{v}$, equals $P_{t_0}$.
In this way, we obtain all $T$-prime ideals of $O_{v}$.
More generally, we have the following:

\begin{rem} In light of Proposition \ref{H=cupH_t}, the well-known order reversing correspondence $\psi:\text{isolated}(\Gamma) \rightarrow \text{Spec}(O_{v})$
can be defined by: for every $(T,\emptyset) \neq \mathcal A=(\mathcal A^L, \mathcal A^R) \in D(T)$,
$\psi(\cup_{j \in \mathcal A^R}H_j)= \cup_{t \in \mathcal A^L} P_{t}$,
and $\psi(\{0\})=\cup_{t \in T} P_{t}$.
So, we have a new way of viewing the correspondence between $\text{isolated}(\Gamma)$ and $\text{Spec}(O_{v})$,
in terms of cuts of $T$.
\end{rem}

\begin{rem} Note that taking $G \cong \Bbb Z$ in our construction, the $T$-prime ideals of $O_{v}$ are precisly
the principal prime ideals of $O_{v}$.
\end{rem}


It is interesting to point out another known connection between the notion of cuts and that of a valuation.
In [Sa1], given a totally ordered abelian group, we constructed a cut monoid. We showed that
for any algebra over a valuation domain, there exists a function, called the filter quasi-valuation, which is induced by the algebra and the valuation; the values of the filter quasi-valuation lie inside the cut monoid.
It turned out that one can prove some interesting properties regarding
algebras over valuation domains using these quasi-valuations.
It would be interesting to know if one can generalize the results of this paper to
algebras over valuation domains and certain cuts of partially ordered sets.
For more information on quasi-valuations see [Sa1], [Sa2] and [Sa3].

We close this paper with an algebraic geometry point of view.
Let $A$ be a ring (not necessarily commutative) whose prime spectrum satisfies (K2) and is totally ordered.
We endow Spec($A$) with the usual
Zarisky topology (even when $A$ is not commutative; see [OV, p. 36]).
It is clear that a subset $\mathcal K$ of Spec$A$ is closed
in the Zarisky topology iff $\mathcal K=\emptyset$ or $\mathcal K$ is a final subset of the form $[P, \infty)$, for some $P \in \text{Spec}(A)$
(note that $[P, \infty)$ is just the set of prime ideals of $A$ containing or equal to $P$).
Viewing $\text{IP}(\text {Spec}(A))$ as a subspace of Spec$(A)$, we obtain the following:

\begin{prop} \label{closed=final} The closed subsets of $\text{IP}( \text{Spec}(A))$ are precisely the final subsets of $\text{IP}( \text{Spec}(A))$.

\end{prop}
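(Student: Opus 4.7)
The plan is to combine the description of closed subsets of $\text{Spec}(A)$ recorded just before the proposition with a direct structural analysis of nonempty final subsets of $\text{IP}(\text{Spec}(A))$. Since the closed subsets of $\text{Spec}(A)$ are $\emptyset$ and the intervals $[P,\infty)$ with $P \in \text{Spec}(A)$, the closed subsets of the subspace $\text{IP}(\text{Spec}(A))$ are exactly $\emptyset$ and the sets $[P,\infty) \cap \text{IP}(\text{Spec}(A))$. The implication ``closed $\Rightarrow$ final'' is then immediate: if $Q \in [P,\infty) \cap \text{IP}(\text{Spec}(A))$ and $Q' \in \text{IP}(\text{Spec}(A))$ satisfies $Q' \supseteq Q$, then $Q' \supseteq P$, so $Q'$ lies in the same set.

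For the converse, let $F$ be a nonempty final subset of $\text{IP}(\text{Spec}(A))$ and set $P = \bigcap_{Q \in F} Q$. Because $F$ is a chain of prime ideals (here the total ordering of $\text{Spec}(A)$ is used), $P$ is itself prime, as already recalled in the proof of Theorem \ref{specOv is isomorphic to some dedekind set}. I would then split into cases according to whether $F$ has a smallest element. If $P_0 = \min F$ exists, then $P = P_0 \in F$, and finality gives $F = [P_0,\infty) \cap \text{IP}(\text{Spec}(A))$ at once, which is closed.

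Suppose instead $F$ has no smallest element, so that $P \notin F$ and each $Q \in F$ strictly contains $P$. The natural candidate for a defining closed set is $(P,\infty) \cap \text{IP}(\text{Spec}(A))$, and I would verify the equality $F = (P,\infty) \cap \text{IP}(\text{Spec}(A))$ as follows. If $Q' \in \text{IP}(\text{Spec}(A))$ with $Q' \supsetneq P$ did not lie in $F$, then finality combined with the total ordering would force $Q' \subsetneq Q$ for every $Q \in F$, whence $Q' \subseteq \bigcap_{Q \in F} Q = P$, contradicting $Q' \supsetneq P$.

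The main subtlety, which I expect to be the crux, is to argue that in this no-minimum case $P \notin \text{IP}(\text{Spec}(A))$, so that $(P,\infty) \cap \text{IP}(\text{Spec}(A))$ coincides with $[P,\infty) \cap \text{IP}(\text{Spec}(A))$ and is therefore closed in the subspace. The observation is that if $P$ admitted an immediate successor $P^{+}$, then $P^{+} \in \text{IP}(\text{Spec}(A))$ and $P^{+} \supsetneq P$, so the previous step would place $P^{+} \in F$; but then $P^{+}$ would be the smallest element of $F$, contradicting the case assumption. With this step the argument closes, and no further use of (K2) is needed beyond the way it governs the ambient poset.
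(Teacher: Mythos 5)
Your overall strategy is sound and close in spirit to the paper's, but the step you yourself flag as the crux contains a genuine gap. In the no-minimum case you suppose, toward a contradiction, that $P=\bigcap_{Q\in F}Q$ has an immediate successor $P^{+}$, and you assert that $P^{+}\in\text{IP}(\text{Spec}(A))$ so as to feed it into the identity $F=(P,\infty)\cap\text{IP}(\text{Spec}(A))$. That assertion is unjustified: what $P^{+}$ has for free is an immediate \emph{predecessor} (namely $P$), so $P^{+}\in\text{IS}(\text{Spec}(A))$; there is no reason why $P^{+}$ should itself have an immediate successor, i.e.\ lie in $\text{IP}(\text{Spec}(A))$ (this looks like a slip between IS and IP, whose definitions in the paper are easy to transpose). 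Since membership in $\text{IP}(\text{Spec}(A))$ is exactly what you need to place $P^{+}$ in $F$, the contradiction is not actually derived as written; one cannot excuse the claim by the eventual vacuity of the hypotheses, because establishing that vacuity is precisely what this step is supposed to do.

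The repair is short and in fact makes the detour through ``$P^{+}\in F$'' unnecessary: every $Q\in F$ strictly contains $P$, so by the total ordering and the immediacy of $P^{+}$ over $P$ one gets $Q\supseteq P^{+}$ for all $Q\in F$, whence $P=\bigcap_{Q\in F}Q\supseteq P^{+}\supset P$, a contradiction; hence $P\notin\text{IP}(\text{Spec}(A))$ and your case closes. This is exactly the content of Lemma \ref{if B is a chain the the union of immediate successors does not reach X}, and it is how the paper argues: it avoids your case split altogether by proving directly that $[\bigcap_{P\in\mathcal J}P,\infty)\cap\text{IP}(\text{Spec}(A))=\mathcal J$ for any nonempty final subset $\mathcal J$, the inclusion $\subseteq$ being supplied by that lemma. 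The rest of your argument --- the description of the subspace-closed sets, the minimum case, the identity $F=(P,\infty)\cap\text{IP}(\text{Spec}(A))$, the primeness of the chain intersection, and the remark that (K2) is not needed --- is correct.
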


\begin{proof} It is clear that a closed subset of $\text{IP}( \text{Spec}(A))$ is a final subset of $\text{IP}( \text{Spec}(A))$. We prove the converse.
Let $\mathcal J$ be a final subset of $\text{IP}( \text{Spec}(A))$. If $\mathcal J=\emptyset$ then
$\emptyset \cap \text{IP}( \text{Spec}(A))=\emptyset$ (or $\{ M \} \cap \text{IP}( \text{Spec}(A))=\emptyset$, where $M$ is the maximal ideal of $A$).
If $\mathcal J \neq \emptyset$ then $$[\cap_{P \in \mathcal J}P, \infty) \cap \text{IP}( \text{Spec}(A))=\mathcal J;$$
indeed, $\supseteq$ is obvious and $\subseteq$ is valid by Lemma \ref{if B is a chain the the union of immediate successors does not reach X}.
\end{proof}

\begin{rem} Note that the previous proposition is valid without the assumption that Spec$(A)$ satisfies (K2). \end{rem}

\begin{cor} \label{1:1 correspondence between the points of Spec and the closed subsets} There is a 1:1 correspondence between the points of Spec$(A)$ and the closed subsets of $\text{IP}( \text{Spec}(A))$.
\end{cor}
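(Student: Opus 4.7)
The plan is to chain together two bijections. First, by Proposition \ref{closed=final} the closed subsets of $\text{IP}(\text{Spec}(A))$ are exactly the final subsets of $\text{IP}(\text{Spec}(A))$ (including the empty set). Second, for any totally ordered set $T$ there is a canonical bijection between the set of final subsets of $T$ and $D(T)$, sending a final subset $\mathcal J$ to the cut $(T\setminus\mathcal J,\mathcal J)$; the empty final subset corresponds to the cut $(T,\emptyset)$, and $T$ itself corresponds to $(\emptyset,T)$. Applying this with $T=\text{IP}(\text{Spec}(A))$, I obtain a bijection between the closed subsets of $\text{IP}(\text{Spec}(A))$ and $D(\text{IP}(\text{Spec}(A)))$.

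Next I would invoke Theorem \ref{specOv is isomorphic to some dedekind set}, which, since $\text{Spec}(A)$ is totally ordered and satisfies (K2), gives an order isomorphism $\text{Spec}(A) \cong D(\text{IP}(\text{Spec}(A)))$. Composing this with the bijection from the previous step produces the required 1:1 correspondence between the points of $\text{Spec}(A)$ and the closed subsets of $\text{IP}(\text{Spec}(A))$.

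If desired, I would also describe the correspondence concretely: a point $P \in \text{Spec}(A)$ is sent to the final subset $[P,\infty)\cap \text{IP}(\text{Spec}(A))$, which by the argument in Proposition \ref{closed=final} is precisely the closed subset of $\text{IP}(\text{Spec}(A))$ cut out by $P$. The inverse sends a nonempty final subset $\mathcal J \subseteq \text{IP}(\text{Spec}(A))$ to $\cap_{Q\in\mathcal J}Q$, and the empty final subset to the maximal ideal of $A$ (which exists since $\text{Spec}(A)$ satisfies (K1), being totally ordered with a Dedekind structure by Theorem \ref{specOv is isomorphic to some dedekind set}).

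The only subtle point, and the one I would verify carefully, is the handling of the extremal cuts and the empty closed subset: one must ensure that the maximal element of $\text{Spec}(A)$ corresponds to the empty final subset and that the minimal element (if it exists) corresponds to all of $\text{IP}(\text{Spec}(A))$, so that the count matches on both sides. Apart from this bookkeeping, the argument is a direct composition of already-established correspondences and no further technical difficulty is expected.
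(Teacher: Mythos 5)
Your argument is correct and is essentially the paper's proof: the paper also obtains the correspondence by composing Theorem \ref{specOv is isomorphic to some dedekind set} (Spec$(A) \cong D(\text{IP}(\text{Spec}(A)))$) with Proposition \ref{closed=final} (closed $=$ final subsets), the identification of cuts with final subsets being implicit. Your explicit description of the map $P \mapsto [P,\infty)\cap \text{IP}(\text{Spec}(A))$ and the bookkeeping of the extremal cases are just a more detailed spelling-out of the same route.
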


\begin{proof} By Theorem \ref{specOv is isomorphic to some dedekind set} and Proposition \ref{closed=final}.

\end{proof}

It is not difficult to see that one can obtain similar results (as in Proposition \ref{closed=final} and Corollary 
\ref{1:1 correspondence between the points of Spec and the closed subsets}) for $\text{IS}( \text{Spec}(A))$.

Assume that Spec$(A)$ has more than one point.
Considering $\text{IP}( \text{Spec}(A))$ as a subset of Spec$(A)$, it is not difficult to see, using the property (K2) (or Theorem
\ref{intersection of final subset of IP gives every nonmaximal element}), that it is dense in Spec$(A)$.
For $\text{IS}( \text{Spec}(A))$, the situation is a bit different. In fact, $\text{IS}( \text{Spec}(A))$ is dense in Spec$(A)$
iff the minimal prime ideal of $A$ is not an immediate predecessor; the proof is not difficult and is left for the reader.



Now, let $T$ be a totally ordered set. In view of Corollary \ref{DT corresponds to a valuation domain},
$D(T)$ has an induced Zarisky topology defined on it:
the topology in which the closed subsets are $\emptyset$ and the subsets of the form $[C,\infty)$ for some $C \in D(T)$.
In particular, the induced Zarisky topology on a Dedekind totally ordered set is compact, $T_0$ and sober (recall that a topological space is called sober if every nonempty irreducible closed subset has a unique generic point).
Also recall that the cop-topology on a poset $(S,\leq)$ is defined as the topology which has $\{ x \in S \mid x \geq s\}_{s \in S}$
as a subbasis for the closed sets.
It is now clear that the induced Zarisky topology on a Dedekind totally ordered set is equal to its cop-topology.



Finally, in view of Remarks \ref{def of phi} and \ref{phi1T=IP and phi2T=IP} and the above discussion, we have:

\begin{rem}
If $T \neq \emptyset$ then $\varphi_2(T)$ is dense in $D(T)$; and $\varphi_1(T)$ is dense in $D(T)$ iff $(\emptyset,T)$, the minimal element of $D(T)$,
is not an immediate predecessor, iff $T$ has no minimal element.
Thus, in view of Theorem \ref{T and IPSpec} and Proposition \ref{closed=final}, $T$ has also an induced Zarisky topology defined on it:
the topology in which the closed subsets are the final subsets of $T$.

\end{rem}

Department of Mathematics, Sce College, Ashdod 77245, Israel.

{\it E-mail address: sarusss1@gmail.com}

\end{document}